\newtheorem{theorem}{Theorem}[section]
\newtheorem{lemma}[theorem]{Lemma}
\theoremstyle{definition}
\numberwithin{equation}{section}
\newcommand{\RZ}{{\mathbb R} \slash {\mathbb Z}}
\newcommand{\bR}{{\mathbb R}}
\newcommand{\bRplus}{{\mathbb R}_{>0}}
\newcommand{\bRgeq}{{\mathbb R}_{\geq 0}}
\newcommand{\altm}{a}
\newcommand{\dH}[1]{\;{\rm d}{\mathcal{H}}^{#1}} 
\newcommand{\dd}[1]{\frac{\rm d}{{\rm d}#1}}
\newcommand{\ddt}{\dd{t}}
\newcommand{\drho}{\;{\rm d}\rho}
\newcommand{\ds}{\;{\rm d}s}
\newcommand{\Vh}{\underline{V}^h}
\newcommand{\ratio}{{\mathfrak r}}
\newcommand{\Id}{{\rm Id}}
\newcommand{\dist}{\operatorname{dist}}
\renewcommand{\deg}{{d}}
\newcommand{\sgn}{\operatorname{sgn}}
\begin{document}

\title[A stable scheme for anisotropic curve shortening flow]{%
An unconditionally stable finite element scheme for anisotropic curve 
shortening flow}

\author[K. Deckelnick]{Klaus Deckelnick}
\address{Institut f\"ur Analysis und Numerik,
Otto-von-Guericke-Universit\"at Magdeburg, 39106 Magdeburg, Germany}
\email{klaus.deckelnick@ovgu.de}

\author[R. N\"urnberg]{Robert N\"urnberg}
\address{Dipartimento di Mathematica, Universit\`a di Trento,
38123 Trento, Italy} 
\email{robert.nurnberg@unitn.it}

\thanks{}

\begin{abstract}
Based on a recent novel formulation of parametric anisotropic curve
shortening flow, we analyse a fully discrete numerical method
of this geometric evolution equation. The method uses piecewise linear finite
elements in space and a backward Euler approximation in time.
We establish existence and uniqueness of a discrete solution, as well as
an unconditional stability property.
Some numerical computations confirm the theoretical results and 
demonstrate the practicality of our method.
\end{abstract}

\keywords{anisotropic curve shortening flow; finite element method; 
stability
}

\subjclass{%
65M60, 
65M12, 
53E10, 
35K15
}

\maketitle

\section{Introduction} 

In this paper we study a fully discrete numerical scheme for parametric anisotropic curve shortening flow. This evolution
law arises as a natural gradient flow for the energy
\begin{equation} \label{eq:aniso}
\mathcal E(\Gamma) = 
\int_{\Gamma} \altm(z)\gamma(z,\nu)\dH1(z)
= \int_{\Gamma} \altm\, \gamma(\cdot,\nu) \dH1,
\end{equation}
where $\Gamma$ is a closed curve with unit normal $\nu$ contained in a given 
convex domain $\Omega \subset \bR^2$. 
Furthermore, $a \in C^1(\Omega, \bRplus)$ is a weight function and
$\gamma \in C^0(\Omega \times \bR^2, \bRgeq) \cap 
C^2(\Omega \times (\bR^2 \setminus \{ 0 \}), \bRplus)$
denotes an anisotropy function satisfying
\begin{equation} \label{eq:abshom}
\gamma( z, \lambda  p) = | \lambda | \gamma( z,  p) \quad \text{ for all }  p \in \bR^2,\ \lambda \in \bR,\  z \in \Omega.
\end{equation}
In addition, we assume that $\gamma$
is strictly convex in the sense that for every compact $K \subset \Omega$ there exists $c_K>0$ such that
\begin{equation*} 
\gamma_{pp}(z,p) q \cdot q \geq c_K | q |^2 \quad \text{ for all } z \in K,\ p,q \in \bR^2 \text{ with } p \cdot q =0, |p|=1.
\end{equation*}
Here, and in what follows,
$\gamma_p$ and $\gamma_{pp}$ denote gradient and Hessian of the 
function $p \mapsto \gamma(\cdot, p)$.
Anisotropic energies
of the form \eqref{eq:aniso} play a role in applications, 
such as materials science, crystal growth, phase transitions and image
processing, and in differential geometry, and we refer to e.g.\ 
\cite{TaylorCH92,Gurtin93,BellettiniP96,ClarenzDR03,Bellettini04a,Giga06,
CasellesKS97,AlfaroGHMS10,glns}
for examples and further details. 
It can be shown, see \cite[Appendix~A]{finsler},
that the first variation of $\mathcal E$ in the direction of 
a smooth vector field $V$ is given by
\begin{equation} \label{eq:firstvar}
d \mathcal E(\Gamma;V)= - \int_{\Gamma} a\, \gamma(\cdot,\nu)
\,\varkappa_\gamma \, V \cdot \nu_\gamma \dH1,
\end{equation}
where 
\[
\nu_\gamma = \frac\nu{\gamma(\cdot,\nu)} ~\text{ and }~
\varkappa_\gamma = \varkappa \gamma_{pp}(\cdot,\nu) \tau \cdot \tau 
- \sum_{i=1}^2 \gamma_{p_i z_i}(\cdot,\nu) 
- \frac{\nabla \altm}{\altm} \cdot \gamma_p(\cdot,\nu) \quad \text{ on } \Gamma
\]
denote the anisotropic normal and the anisotropic curvature of $\Gamma$,
respectively, with
$\tau$ and $\varkappa$ the tangent and curvature of $\Gamma$. 

In view of \eqref{eq:firstvar}, a natural gradient flow for the energy $\mathcal E$ evolves a family of closed curves $(\Gamma(t))_{t \in [0,T]}$ according
to 
\begin{equation}  \label{eq:anisolaw}
\mathcal V_\gamma = \varkappa_\gamma \quad \text{ on } \Gamma(t),
\end{equation}
where $\mathcal V_\gamma = \frac{1}{\gamma(\cdot,\nu)} \mathcal V$ and
$\mathcal V$ is the normal velocity of $\Gamma(t)$. 
We remark that solutions of \eqref{eq:anisolaw} satisfy the energy relation
\begin{equation*} 
\ddt \int_{\Gamma(t)} a \, \gamma(\cdot,\nu) \dH1 + \int_{\Gamma(t)}
 | \mathcal V_\gamma |^2 \, a \,  \gamma(\cdot,\nu) \dH1  = 0.
\end{equation*}
Note that in the isotropic case, $a(z) = 1$ and $\gamma(z,p) = |p|$,
the flow \eqref{eq:anisolaw} collapses to the well--known curve shortening flow $\mathcal V =\varkappa$. 
The isotropic curve shortening flow and its higher dimensional analogue, the mean curvature flow, have been
studied extensively both analytically and
numerically over the last few decades, and we refer to the works
\cite{Ecker04,DeckelnickDE05,Mantegazza11,bgnreview} for more details. 

In the spatially homogeneous case, $a(z) = 1$ and $\gamma(z,p) = \gamma_0(p)$, the
flow \eqref{eq:anisolaw} reduces to the classical anisotropic curve shortening
flow 
\begin{equation} \label{eq:classicalacsf}
\frac{1}{\gamma_0(\nu)} \mathcal V = \varkappa_{\gamma_0},
\end{equation}
where $\varkappa_{\gamma_0} = \varkappa \gamma_{0}''(\nu) \tau \cdot \tau$
denotes the usual anisotropic curvature. 
An example for a nonconstant function $a$ is given by the geodesic curvature
flow in a Riemannian manifold, see \S\ref{sec:geo} and 
\cite[Appendix~B]{finsler} for details.

In this paper we focus on a parametric description of the evolving curves, i.e. 
$\Gamma(t) = x(I,t)$ with 
$x : I \times [0,T] \ni (\rho,t) \mapsto x(\rho,t) \in \bR^2$ 
and $I = \RZ$. Hence the evolution law \eqref{eq:anisolaw} translates into
\begin{equation} \label{eq:acsfI}
\frac{1}{\gamma(x,\nu)} x_t \cdot \nu = \varkappa_\gamma,
\end{equation}
where $\nu = \tau^\perp=(\frac{x_\rho}{|x_\rho|})^\perp$ and
$p^\perp = \binom{p_1}{p_2}^\perp = \binom{-p_2}{p_1}$ denotes
an anti-clockwise rotation of $p$ by $\frac{\pi}{2}$.
Note that
here, and from now on, we think of $\tau$, $\nu$, $\varkappa$ and 
$\varkappa_\gamma$ as being defined on $I \times [0,T]$. 

In order to obtain solutions of \eqref{eq:acsfI}, frequently the 
partial differential equation (PDE) given by
\begin{equation} \label{eq:Dziukani}
\frac{1}{\gamma(x,\nu)} x_t = \varkappa_\gamma \nu
\end{equation}
is solved, with the initial condition $x(\cdot,0)=x_0$, where $x_0$ is a 
parameterization of the initial curve $\Gamma_0$.
Since the right hand side of \eqref{eq:Dziukani} is a geometric invariant, the 
above PDE appears to be a natural choice. 
Let us focus for a moment on the isotropic case $a(z) = 1$ and $\gamma(z,p) = |p|$. Then the
system \eqref{eq:Dziukani} takes the form
\begin{equation} \label{eq:Dziuk}
x_t = \varkappa\nu = \frac1{| x_\rho|} \left( \frac{ x_\rho}{| x_\rho|} \right)_\rho= \frac1{| x_\rho|^2} \left[x_{\rho\rho} 
- ( x_{\rho\rho} \cdot \tau)\, \tau \right],
\end{equation}
so that the underlying PDE  is only weakly parabolic, causing difficulties 
for the numerical analysis. We refer to Dziuk's seminal paper \cite{Dziuk94} 
for the details. A simple remedy is to apply the so-called DeTurck trick, 
and to consider, in place of \eqref{eq:Dziuk},
the strictly parabolic PDE
\begin{equation} \label{eq:DD95}
x_t = \frac{x_{\rho\rho}}{| x_\rho |^2} ,
\end{equation}
whose solutions clearly still satisfy $x_t \cdot \nu = \varkappa$.  
This formulation was proposed
and analysed for the first time in \cite{DeckelnickD95}, see also
\cite{ElliottF17} for a possible generalization.

Extending the DeTurck trick \eqref{eq:DD95} for the isotropic flow
to the anisotropic evolution equation \eqref{eq:acsfI} is highly
nontrivial. However, the main idea is the same: derive a strictly 
parabolic PDE whose solutions satisfy \eqref{eq:acsfI}. In this way,
a uniquely defined tangential velocity is prescribed together with
the normal velocity \eqref{eq:acsfI}, yielding a unique parameterization
of the evolving curve.
In fact, in the recent paper \cite{finsler}, the authors proved that
solutions to the strictly parabolic PDE
\begin{equation} \label{eq:Hxt}
H(x,x_\rho) x_t = [\Phi_p(x, x_\rho)]_\rho - \Phi_z(x,x_\rho) 
\end{equation}
also satisfy \eqref{eq:acsfI}. 
Here 
\begin{equation} \label{eq:Phi}
\Phi(z,p) = \tfrac12 \altm^2(z) \gamma^2(z,p^\perp),
\end{equation}
with $\Phi_z$ denoting the gradient of $z \mapsto \Phi(z,\cdot)$, 
and the matrix
\begin{equation*} 
H(z,p)
= \frac{\altm^2(z) \gamma(z,p^\perp)}{| \gamma_p(z,p^\perp) |^2}
\begin{pmatrix} 
\gamma(z,p^\perp) & \gamma_p(z,p^\perp) \cdot p \\
- \gamma_p(z,p^\perp) \cdot p & \gamma(z, p^\perp) 
\end{pmatrix} \quad \forall\ z \in \Omega,\ p \in \bR^2\setminus\{0\}
\end{equation*}
is positive definite in $\Omega \times (\bR^2\setminus\{0\})$ with
\begin{equation} \label{eq:Hposdef}
H(z,p) \xi \cdot \xi = \frac{\altm^2(z) \gamma^2(z,p^\perp)}
{| \gamma_p(z,p^\perp) |^2} |\xi|^2 \quad \forall\
z \in \Omega,\ p \in \bR^2 \setminus \{ 0 \},\ \xi \in \bR^2.
\end{equation}
The weak formulation of \eqref{eq:Hxt} is 
obtained by multiplying it with a test function, integrating over $I$
and performing one integration by parts. It reads as follows.
Given $x_0 : I \to \Omega$, find $x:I \times [0,T] \to \Omega$ such 
that $x(\cdot,0)= x_0$ and, for $t \in (0,T]$,
\begin{equation} \label{eq:Hxtweak}
\int_I H(x,x_\rho) x_t \cdot \eta \drho
+ \int_I \Phi_p(x,x_\rho) \cdot \eta_\rho \drho + \int_I \Phi_z(x,x_\rho) \cdot \eta \drho = 0 \quad \forall\ \eta \in [H^1(I)]^2.
\end{equation}
For a continuous-in-time semidiscrete finite element approximation 
of \eqref{eq:Hxtweak} using piecewise linear elements the authors were then
able to prove an optimal $H^1$--error bound, see \cite[Theorem~4.1]{finsler}. 

In this paper we propose and analyse a fully discrete finite element
approximation of \eqref{eq:Hxtweak}. 
The scheme, which will be introduced in Section~\ref{sec:fea}, is
nonlinear and uses both explicit and implicit approximations in
$\Phi_p(x,x_\rho)$ and $\Phi_z(x,x_\rho)$ that
are chosen in such a way as to yield unconditional stability. 
Here the discrete stability bound
will mimic the natural estimate
\begin{equation*} 
\ddt \int_I \Phi( x,  x_\rho) \drho 
= - \int_I  H(x,x_\rho) x_t \cdot x_t \drho  \leq 0,
\end{equation*}
which follows from choosing $\eta = x_t$ in \eqref{eq:Hxtweak}. 
Furthermore, we prove the existence of a unique solution under a
suitable CFL condition.
Then in Section~\ref{sec:nr} we present some numerical simulations,
demonstrating the practicality of the method, 
as well as the good properties with respect to stability and 
the distribution of vertices.
Let us finally mention that
alternative numerical approximations of anisotropic variants of curve
shortening flow, 
which are based on a parametric description of the moving curve,
have also been considered in 
\cite{BenesM98,Dziuk99,MikulaS01,MikulaS04,MikulaS04a,HausserV06,Pozzi07,%
triplejANI,curves3d,fdfi,hypbol}. 

\section{Finite element approximation} \label{sec:fea}

Let $[0,1]=\bigcup_{j=1}^J I_j$, $J\geq3$, be a
decomposition of $[0,1]$ into the intervals 
$I_j=[q_{j-1},q_j]$, where, for simplicity, 
$q_j = jh$, $j=0,\ldots,J$, with $h = \frac 1J$.
Within $I$ we identify $q_J=1$ with $q_0=0$ and 
define the finite element space
$\Vh = \{\chi \in C^0(I,\bR^2) : \chi\!\mid_{I_j} 
\text{ is affine},\ j=1,\ldots, J\}$.
For two piecewise continuous functions, with possible jumps at the 
nodes $\{q_j\}_{j=1}^J$, we define the mass lumped $L^2$--inner product 
\begin{equation}
( u, v )^h = \tfrac12\sum_{j=1}^J h_j
\left[(u\cdot v)(q_j^-) + (u \cdot v)(q_{j-1}^+)\right],
\label{eq:ip0}
\end{equation}
where $(u \cdot v)(q_j^\pm)=\underset{\delta\searrow 0}{\lim}\ 
(u \cdot v)(q_j\pm\delta)$. We define the associated norm on $\Vh$ via
$\| u \|_h^2 = (u, u)^h$.

In order to discretize in time, let $t_m=m \Delta t$, $m=0,\ldots,M$, 
with the uniform time step $\Delta t = \frac TM >0$. 
On recalling \eqref{eq:Phi}, we assume that 
there exists a splitting $\Phi = \Phi^+ + \Phi^-$ such that $\Phi^\pm \in C^1(\Omega \times \bR^2)$ and
$z \mapsto \pm \Phi^\pm(z,p)$ are convex in $\Omega$ for all $p\in\bR^2$. Furthermore, we assume that
for every compact set $K \subset \Omega$ there exists $L_K \geq 0$ such that
\begin{equation} \label{eq:lip1}
| \Phi_z^\pm(z,p) - \Phi_z^\pm(z,q) | \leq L_K (|p| + |q|) | p-q| \quad \text{ for all } z \in K,\ p,q \in \bR^2.
\end{equation}

Then our finite element
scheme is defined as follows.
Given $x^m_h \in \Vh$ with $\Gamma^m_h:=x^m_h(I) \subset \Omega$, 
for $m=0,\ldots,M-1$, find $x^{m+1}_h \in \Vh$ such that
$\Gamma^{m+1}_h \subset \Omega$ and 
\begin{align}\label{eq:fea}
& 
\frac1{\Delta t} \left(H(x^m_h,x^m_{h,\rho}) (x^{m+1}_{h} - x^m_h), \eta_h 
\right)^h
+ \left(\Phi_p(x^m_h,x^{m+1}_{h,\rho}) , \eta_{h,\rho} \right)^h
\nonumber \\ & \qquad
+ \left( \Phi^+_z(x^{m+1}_h,x^{m+1}_{h,\rho}) 
+ \Phi^-_z(x^m_h,x^{m+1}_{h,\rho}) ,  \eta_h \right)^h =0
\qquad \forall\ \eta_h \in \Vh.
\end{align}
The convex/concave splitting employed for the implicit/explicit
approximation of $\Phi_z(\cdot, x^{m+1}_{h,\rho})$ in \eqref{eq:fea}, 
is by now standard practice in the
numerical analysis community. This technique goes back to \cite{ElliottS93},
see also \cite{BarrettB97,vch,hypbol,finsler} for subsequent applications of
such splittings. It leads to an unconditionally stable approximation, as we
show in our first result.

\begin{theorem} 
Any solution of \eqref{eq:fea} satisfies the energy estimate 
\begin{align} \label{eq:fdstab}
& \left( \Phi( x^{m+1}_h,  x^{m+1}_{h,\rho}),1 \right)^h 
+ \frac1{\Delta t} 
\left( H(x^m_h,x^m_{h,\rho}) (x^{m+1}_h - x^m_h), x^{m+1}_h - x^m_h\right)^h 
\nonumber \\ & \qquad
\leq \left(\Phi( x^m_h, x^m_{h,\rho}) ,1 \right)^h .
\end{align}
\end{theorem}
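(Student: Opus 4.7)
The natural test function is $\eta_h = x^{m+1}_h - x^m_h \in \Vh$ in \eqref{eq:fea}, which yields the identity
\begin{align*}
0 &= \tfrac1{\Delta t}\left(H(x^m_h,x^m_{h,\rho})(x^{m+1}_h - x^m_h),\, x^{m+1}_h - x^m_h\right)^h \\
&\qquad + \left(\Phi_p(x^m_h,x^{m+1}_{h,\rho}),\, x^{m+1}_{h,\rho} - x^m_{h,\rho}\right)^h \\
&\qquad + \left(\Phi^+_z(x^{m+1}_h,x^{m+1}_{h,\rho}) + \Phi^-_z(x^m_h,x^{m+1}_{h,\rho}),\, x^{m+1}_h - x^m_h\right)^h .
\end{align*}
The first of these three terms is already the dissipation contribution appearing in \eqref{eq:fdstab}. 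The strategy for the remaining two terms is to produce telescoping lower bounds whose sum reconstructs $(\Phi(x^{m+1}_h,x^{m+1}_{h,\rho}),1)^h-(\Phi(x^m_h,x^m_{h,\rho}),1)^h$.

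For the second term I would exploit the convexity of $p \mapsto \Phi(z,p) = \tfrac12 \altm^2(z)\gamma^2(z,p^\perp)$. Since $\gamma(z,\cdot)$ is convex and nonnegative and $t \mapsto t^2$ is convex and nondecreasing on $[0,\infty)$, their composition is convex, and precomposing with the linear rotation $p \mapsto p^\perp$ preserves this. The supporting hyperplane inequality, applied pointwise at each one-sided nodal limit $q_j^\pm$ used in \eqref{eq:ip0}, then gives
\[
\Phi_p(x^m_h,x^{m+1}_{h,\rho})\cdot(x^{m+1}_{h,\rho}-x^m_{h,\rho}) \geq \Phi(x^m_h,x^{m+1}_{h,\rho}) - \Phi(x^m_h,x^m_{h,\rho}),
\]
and summing with the mass-lumping weights preserves the inequality.

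For the third term I would use the standard consequence of the convex/concave splitting in $z$. Convexity of $\Phi^+(\cdot,p)$ and concavity of $\Phi^-(\cdot,p)$ (for fixed $p$) give the two supporting-hyperplane inequalities
\begin{align*}
\Phi^+_z(x^{m+1}_h,x^{m+1}_{h,\rho})\cdot(x^{m+1}_h-x^m_h) &\geq \Phi^+(x^{m+1}_h,x^{m+1}_{h,\rho}) - \Phi^+(x^m_h,x^{m+1}_{h,\rho}), \\
\Phi^-_z(x^m_h,x^{m+1}_{h,\rho})\cdot(x^{m+1}_h-x^m_h) &\geq \Phi^-(x^{m+1}_h,x^{m+1}_{h,\rho}) - \Phi^-(x^m_h,x^{m+1}_{h,\rho}),
\end{align*}
the precise asymmetric choice of arguments in \eqref{eq:fea} being dictated exactly by the need for these two signs to be simultaneously favourable. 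Adding them collapses $\Phi^++\Phi^-$ into $\Phi$.

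Finally, substituting both lower bounds into the tested identity and observing that the pair $\pm(\Phi(x^m_h,x^{m+1}_{h,\rho}),1)^h$ cancels produces exactly \eqref{eq:fdstab}. Conceptually, the only nonroutine step is recognising that $p \mapsto \Phi(z,p)$ is convex, since this is what makes the fully implicit treatment of $x^{m+1}_{h,\rho}$ in $\Phi_p$ dissipative; everything else is bookkeeping with the mass-lumped quadrature, which behaves identically to a nodal sum of scalar inequalities.
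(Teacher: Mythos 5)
Your proposal is correct and follows essentially the same route as the paper: test with $\eta_h = x^{m+1}_h - x^m_h$, use convexity of $p \mapsto \Phi(z,p)$ for the $\Phi_p$ term and the convex/concave splitting in $z$ for the $\Phi_z^\pm$ terms, and let the intermediate term $\pm\bigl(\Phi(x^m_h,x^{m+1}_{h,\rho}),1\bigr)^h$ cancel. The only cosmetic difference is in justifying convexity of $p \mapsto \gamma^2(z,p^\perp)$: you compose the convex nonnegative $\gamma$ (whose convexity rests on $1$-homogeneity plus the strict convexity hypothesis) with $t \mapsto t^2$, whereas the paper invokes positive definiteness of $[\gamma^2]_{pp}$ from the literature; both are fine.
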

\begin{proof}
{From} the convexity properties of $\Phi$ and $\pm\Phi^\pm$ we infer that
\begin{align} \label{eq:Phiconvex}
\left(\Phi_p(x^m_h, x^m_{h,\rho}+\eta_{h,\rho}), \eta_{h,\rho} \right)^h 
&\geq \bigl( \Phi(x^m_h,x^m_{h,\rho}+\eta_{h,\rho}),1 \bigr)^h
- \bigl( \Phi(x^m_h,x^m_{h,\rho}),1 \bigr)^h, \nonumber \\
\left( \Phi^+_z(x^m_h+\eta_h,x^m_{h,\rho}+\eta_{h,\rho}),\eta_h \right)^h 
& \geq \bigl( \Phi^+(x^m_h+\eta_h,x^m_{h,\rho}+\eta_{h,\rho}),1 \bigr)^h 
\nonumber \\ & \qquad
- \bigl( \Phi^+(x^m_h,x^m_{h,\rho}+\eta_{h,\rho}),1 \bigr)^h, \nonumber \\
\left( \Phi^-_z(x^m_h,x^m_{h,\rho}+\eta_{h,\rho}), \eta_h \right)^h 
& \geq \left( \Phi^-(x^m_h+ \eta_h,x^m_{h,\rho}+\eta_{h,\rho}), 1 \right)^h 
\nonumber \\ & \qquad
- \left( \Phi^-(x^m_h,x^m_{h,\rho}+\eta_{h,\rho}),1 \right)^h,
\end{align}
for all $\eta_h \in \Vh$. Choosing $\eta_h = x^{m+1}_h - x^m_h$ in
\eqref{eq:fea} and applying \eqref{eq:Phiconvex} yields the bound
\eqref{eq:fdstab}. 
\end{proof}

We note that the fully discrete finite element approximation \eqref{eq:fea} 
can be seen as a generalization of two fully discrete schemes introduced by the
authors in \cite{finsler}. In particular, 
in the special case of a spatially homogeneous anisotropy, recall
\eqref{eq:classicalacsf}, 
the scheme
\eqref{eq:fea} reduces to \cite[(5.4)]{finsler}. Similarly, in the case when
\eqref{eq:acsfI} models geodesic flow in a Riemannian manifold, 
the approximation \cite[(5.10)]{finsler}
is a special case of the scheme \eqref{eq:fea}.
Moreover, we remark that the
nonlinear systems of equations arising from \eqref{eq:fea} 
can be solved with a Newton method or with a Picard-type iteration. 
In our experience, in general, in practice these solution methods converge
within a few iterations.

Let us next address the existence and uniqueness for the nonlinear system \eqref{eq:fea}. 
We assume that $x^m_h \in \Vh$ is given with $x^m_{h, \rho} \neq 0$ in $I$ and $\Gamma^m_h \subset \Omega$.
There exists $R>0$ such that
$K_0 := \{ z \in \bR^2 : \dist(z, \Gamma^m_h) \leq R \} \subset \Omega$.
Before we present our main theorem, we collect the following auxiliary results.
\begin{lemma}
There exists a constant $C_0>0$ depending on $x^m_h$ such that
\begin{subequations} 
\begin{alignat}{2}
\Phi(z,p) & \geq C_0 |p|^2 \qquad &&\forall\ p \in \bR^2, z \in K_0,
\label{eq:estPhi} \\
\bigl( \Phi_p(z,q) - \Phi_p(z,p) \bigr) \cdot (q -p) & \geq 
C_0 |  q -  p |^2 \qquad && \forall\ p,q \in \bR^2, z \in K_0, \label{eq:ellip1} \\
H(x^m_h,x^m_{h,\rho}) \xi \cdot \xi & \geq C_0 |\xi|^2 \qquad &&
\forall\ \xi \in \bR^2 \quad\text{in } I. \label{eq:estH}
\end{alignat}
\end{subequations}

\end{lemma}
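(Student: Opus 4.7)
The three bounds come from compactness of $K_0$ (and of the image of $x^m_h$) together with positivity, homogeneity and strict convexity of $\gamma$. Throughout I set $a_0 = \min_{K_0} a > 0$ and $\gamma_0 = \min_{K_0\times\bS^1} \gamma > 0$; both are attained by continuity of $a$, $\gamma$ on compact sets. These positive lower bounds will be used repeatedly.

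Estimate \eqref{eq:estPhi} is immediate: from \eqref{eq:abshom} and $|p^\perp|=|p|$ one has $\gamma(z,p^\perp)\ge \gamma_0|p|$ on $K_0 \times \bR^2$, so \eqref{eq:Phi} yields the claim with constant $\tfrac{1}{2} a_0^2 \gamma_0^2$. Estimate \eqref{eq:estH} is equally short: by \eqref{eq:Hposdef} the coefficient of $|\xi|^2$ in $H\xi\cdot\xi$ equals $\tfrac{a^2 \gamma^2}{|\gamma_p|^2}$ evaluated at $(x^m_h,(x^m_{h,\rho})^\perp)$; since $x^m_{h,\rho}$ is piecewise constant with values in $\bR^2\setminus\{0\}$ and $x^m_h(I)\subset K_0$, this coefficient is continuous and strictly positive on each closed subinterval $I_j$, hence bounded below by a positive constant, and the minimum over the finitely many $j$ supplies $C_0$.

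The main work is \eqref{eq:ellip1}. My plan is to establish the uniform Hessian bound
$\Phi_{pp}(z,p)\, q\cdot q \ge C_0 |q|^2$ for all $z \in K_0$, $p \in \bR^2\setminus\{0\}$, $q \in \bR^2$,
from which \eqref{eq:ellip1} follows by a standard monotonicity argument: the function $p \mapsto \Phi(z,p) - \tfrac{C_0}{2}|p|^2$ is then convex on $\bR^2$ (convexity at the origin follows from convexity on $\bR^2\setminus\{0\}$ together with the continuity, in fact local Lipschitz property, of $\Phi_p(z,\cdot)$, which is a consequence of $2$-homogeneity of $\Phi$ in $p$), and monotonicity of its gradient gives exactly \eqref{eq:ellip1}. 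To get the Hessian bound I set $\tilde\gamma(z,p)=\gamma(z,p^\perp)$, which inherits $1$-homogeneity and the strict convexity hypothesis from $\gamma$ since $(\cdot)^\perp$ is an isometry. Writing $\Phi=\tfrac12 a^2\tilde\gamma^2$ gives $\Phi_{pp}=a^2(\tilde\gamma_p\otimes\tilde\gamma_p+\tilde\gamma\,\tilde\gamma_{pp})$. For $p\in\bS^1$ and $q=\alpha p+\beta p^\perp$, Euler's identity $\tilde\gamma_p(z,p)\cdot p=\tilde\gamma(z,p)$ together with $\tilde\gamma_{pp}(z,p)\,p=0$ (both consequences of $1$-homogeneity) rewrites $\Phi_{pp}(z,p)q\cdot q/a^2$ as a quadratic form in $(\alpha,\beta)$ whose $2{\times}2$ symmetric matrix has determinant $\tilde\gamma^3\,\tilde\gamma_{pp}(z,p)\,p^\perp\!\cdot p^\perp$ and trace at least $\tilde\gamma^2$; the strict convexity hypothesis (applied with $|p|=1$ and $p^\perp\perp p$) and the positivity of $\tilde\gamma$ make both strictly positive, and compactness of $K_0\times\bS^1$ upgrades this to a uniform positive lower bound on the smallest eigenvalue. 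Extension to all $p\ne 0$ is by $0$-homogeneity of $\Phi_{pp}$ in $p$.

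The hard part is exactly this last step. The strict convexity hypothesis controls $\tilde\gamma_{pp}$ only in directions orthogonal to $p$, and so by itself does not preclude degeneracy of $\tilde\gamma_{pp}$ along $p$; the rank-one term $\tilde\gamma_p\otimes\tilde\gamma_p$ must be used in an essential way to restore positive-definiteness in the radial direction, and it is the interplay of the two terms — made quantitative through the determinant/trace computation above — that closes the argument.
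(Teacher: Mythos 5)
Your proof is correct, and for the key estimate \eqref{eq:ellip1} it takes a genuinely different route from the paper. (For \eqref{eq:estPhi} and \eqref{eq:estH} you argue exactly as the paper does, via homogeneity, positivity and compactness.) For \eqref{eq:ellip1} the paper does not prove positive definiteness of $[\gamma^2]_{pp}$ itself but cites \cite[Remark~1.7.5]{Giga06}, and it then deduces the monotonicity inequality by an explicit two--case analysis: if the segment $[p,q]$ avoids the origin it integrates $\Phi_{pp}$ along it using $0$--homogeneity, and if the segment hits the origin (so $p=-\tfrac{s}{1-s}q$) it computes the left--hand side directly from the $1$--homogeneity of $\Phi_p$ and the Euler relation $\Phi_p(z,q)\cdot q=2\Phi(z,q)$, closing with \eqref{eq:estPhi}. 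You instead make the Hessian bound self--contained via the determinant/trace computation for $\Phi_{pp}=a^2(\tilde\gamma_p\otimes\tilde\gamma_p+\tilde\gamma\,\tilde\gamma_{pp})$ in the orthonormal basis $\{p,p^\perp\}$ --- which correctly isolates how the rank--one term restores coercivity in the radial direction where $\tilde\gamma_{pp}$ degenerates --- and you dispose of the segment--through--the--origin issue once and for all by noting that a $C^1$ function whose Hessian dominates $C_0\,\Id$ off the origin has $p\mapsto\Phi(z,p)-\tfrac{C_0}{2}|p|^2$ convex on all of $\bR^2$, so gradient monotonicity gives \eqref{eq:ellip1} without case distinctions. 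The paper's argument is shorter given the external citation; yours is self--contained, exhibits the mechanism behind the cited positivity, and replaces the ad hoc degenerate case by a clean general principle. All the individual steps you invoke (the inherited strict convexity of $\tilde\gamma$ under the rotation $p\mapsto p^\perp$, the identities $\tilde\gamma_p\cdot p=\tilde\gamma$ and $\tilde\gamma_{pp}p=0$, the value $\tilde\gamma^3\,\tilde\gamma_{pp}p^\perp\!\cdot p^\perp$ of the determinant, the compactness upgrade on $K_0\times\bS^1$, and the $C^1$ extension of $\Phi_p(z,\cdot)$ across $p=0$ needed for the convexity-through-the-origin step) check out.
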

\begin{proof} 
The bound \eqref{eq:estPhi} follows from \eqref{eq:abshom} 
and $\Phi(z,p) >0$ for $z \in K_0$ and $|p|=1$. Similarly,
we have from \eqref{eq:Hposdef} and $\min_I |x^m_{h,\rho}| >0$ that
\eqref{eq:estH} holds. 

It remains to show \eqref{eq:ellip1}. 
Since $a\geq a_0>0$ in $K_0$, it is sufficient to carry out the proof for 
$\Phi(z,p)=\frac12 \gamma^2(z,p)$. Note that in this case $\Phi \in C^1(\Omega \times \bR^2, \bRgeq) \cap C^2(\Omega \times (\bR^2 \setminus \{ 0 \}, \bRplus)$.
Furthermore, according to \cite[Remark 1.7.5]{Giga06},
$\gamma^2_{pp}(z,p) := [\gamma^2]_{pp}(z,p)$ is positive definite for $p \neq 0$. In particular, there exists $c_0>0$ such that 
\[
\gamma^2_{pp}(z,p) q \cdot q \geq c_0 |q|^2 \qquad \text{ for all } z \in K_0,\ p, q \in \bR^2, | p | =1.
\]
Observing that $\Phi_{pp}(z,p)= A \gamma^2_{pp}(z,p^\perp) A^T$ with 
$A=\binom{0\ -1}{1\ \phantom{-}0}$, 
we infer
that 
\begin{equation} \label{eq:Phiell} 
\Phi_{pp}(z,p) q \cdot q =  \gamma^2_{pp}(z,p^\perp)A^T q \cdot A^T q
\geq c_0 | A^T q |^2 = c_0 | q|^2
\end{equation}
for all $z \in K_0$, $p,q \in \bR^2, | p|=1$. 
Let us fix $z \in K_0$ and $p,q \in \bR^2$. We distinguish two cases: \\
{\it Case 1:} $s q + (1-s) p \neq 0$ for all $s \in [0,1]$. Then  
\[
\bigl( \Phi_p(z,q) - \Phi_p(z,p) \bigr) \cdot (q -p)  = \int_0^1 \Phi_{pp}(z,sq +(1-s) p) (q-p) \cdot (q-p) \ds \geq c_0 | q-p|^2,
\]
using the fact that $p \mapsto \Phi_{pp}(z,p)$ is 0-homogeneous and 
\eqref{eq:Phiell}. \\
{\it Case 2:} There exists $s \in [0,1]$ with $sq + (1-s) p =0$. 
We may assume that $s \in [0,1)$, so that
$p= - \frac{s}{1-s} q$. 
As $\Phi(z,\lambda q) = \lambda^2 \Phi(z,q)$ for $\lambda \in \bR$,
recall \eqref{eq:abshom}, we have that
$\Phi_p(z,\lambda q) = \lambda \Phi_p(z,q)$ and 
$\Phi_p(z,q) \cdot q = 2 \Phi(z,q)$.
Hence we obtain that
\begin{align*}
\bigl( \Phi_p(z,q) - \Phi_p(z,p) \bigr) \cdot (q -p) & = \bigl( \Phi_p(z,q)- \Phi_p(z,- \tfrac{s}{1-s} q \bigr)
\cdot \bigl( q +  \tfrac{s}{1-s} q \bigr) \\
& = \bigl( 1 + \tfrac{s}{1-s} \bigr)^2 \Phi_p(z,q) \cdot q 
= 2 \bigl( 1 + \tfrac{s}{1-s} \bigr)^2 \Phi(z,q) \\
& \geq 2 C_0 
\bigl( 1 + \tfrac{s}{1-s} \bigr)^2 |q|^2 = 2 C_0 | q-p |^2,
\end{align*}
on noting \eqref{eq:estPhi}.
\end{proof}

We are now in a position to prove our main result.

\begin{theorem} 
There exists $\delta>0$ such that for $\Delta t \leq \delta h$
there is a unique element $x^{m+1}_h \in \Vh$ with 
$\Gamma^{m+1}_h \subset K_0$ which solves \eqref{eq:fea}. The constant
$\delta$ depends on $R,C_0, L_{K_0}$ and $\Phi(x^m_h,x^m_{h,\rho})$.
\end{theorem}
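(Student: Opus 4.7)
The plan is to combine an a priori $L^\infty$-bound coming from the stability estimate \eqref{eq:fdstab} with a strict monotonicity argument for the nonlinear operator associated with \eqref{eq:fea}. The a priori bound ensures that any solution stays in $K_0$ once the CFL is imposed, while strict monotonicity delivers uniqueness and, via a standard continuation/degree argument in the finite-dimensional space $\Vh$, existence.

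For the a priori bound, any solution $x^{m+1}_h$ of \eqref{eq:fea} whose image lies in $\Omega$ satisfies, by \eqref{eq:fdstab} and \eqref{eq:estH},
\[
\tfrac{C_0}{\Delta t}\|x^{m+1}_h - x^m_h\|_h^2 \leq \bigl(\Phi(x^m_h,x^m_{h,\rho}),1\bigr)^h.
\]
Combined with the standard inverse estimate $\|\chi_h\|_{L^\infty(I)}^2 \leq C h^{-1}\|\chi_h\|_h^2$ on $\Vh$, this yields $\|x^{m+1}_h - x^m_h\|_{L^\infty(I)} \leq C_1 \sqrt{\Delta t/h}$ for some $C_1 = C_1(C_0,\Phi(x^m_h,x^m_{h,\rho}))$. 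Choosing $\delta$ so that $C_1\sqrt{\delta} \leq R/2$ forces $\Gamma^{m+1}_h \subset K_0$ whenever $\Delta t \leq \delta h$. Moreover, \eqref{eq:estPhi} together with the energy estimate gives an $h$-independent bound $\|x^{m+1}_{h,\rho}\|_h \leq M$ with $M = M(C_0,\Phi(x^m_h,x^m_{h,\rho}))$.

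Next I would define the nonlinear operator $\mathcal F$ on $\{y \in \Vh : y(I)\subset\Omega\}$ implicitly via
\[
(\mathcal F(y),\eta_h)^h := \tfrac{1}{\Delta t}(H(x^m_h,x^m_{h,\rho})(y-x^m_h),\eta_h)^h + (\Phi_p(x^m_h,y_\rho),\eta_{h,\rho})^h + (\Phi^+_z(y,y_\rho)+\Phi^-_z(x^m_h,y_\rho),\eta_h)^h,
\]
so that \eqref{eq:fea} reads $\mathcal F(x^{m+1}_h)=0$. For two candidate solutions $y_1,y_2$ with images in $K_0$ and $\|y_{i,\rho}\|_h \leq M$, the first two terms of $(\mathcal F(y_1)-\mathcal F(y_2),\, y_1-y_2)^h$ contribute $\tfrac{C_0}{\Delta t}\|y_1-y_2\|_h^2$ and $C_0\|y_{1,\rho}-y_{2,\rho}\|_h^2$ via \eqref{eq:estH} and \eqref{eq:ellip1}, respectively. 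For the $\Phi^+_z$ term, adding and subtracting $\Phi^+_z(y_2,y_{1,\rho})$ produces a nonnegative piece (by convexity of $z\mapsto\Phi^+(z,p)$) plus a remainder bounded, via \eqref{eq:lip1}, Cauchy--Schwarz, Young's inequality, and the above inverse estimate, by $\varepsilon\|y_{1,\rho}-y_{2,\rho}\|_h^2 + C_\varepsilon(L_{K_0},M)\,h^{-1}\|y_1-y_2\|_h^2$; an analogous estimate handles $\Phi^-_z$. Choosing $\varepsilon$ small enough to absorb the gradient term into $C_0\|y_{1,\rho}-y_{2,\rho}\|_h^2$, and then $\delta$ small enough that $C_\varepsilon h^{-1} \leq \tfrac{C_0}{2\Delta t}$ whenever $\Delta t\leq \delta h$, yields strict monotonicity
\[
(\mathcal F(y_1)-\mathcal F(y_2),y_1-y_2)^h \geq \tfrac{C_0}{2\Delta t}\|y_1-y_2\|_h^2 + \tfrac{C_0}{2}\|y_{1,\rho}-y_{2,\rho}\|_h^2.
\]

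Uniqueness among solutions with $\Gamma^{m+1}_h\subset K_0$ is then immediate by testing the difference of two solutions against $y_1-y_2$. Existence follows by a standard continuation/degree argument: introduce a homotopy from a trivially solvable linear problem (e.g. with all nonlinear arguments frozen at $x^m_h$) to \eqref{eq:fea}; the a priori bound above extends uniformly along the family, keeping the solution set in a fixed compact subset of $K_0$-curves, while strict monotonicity makes the Jacobian invertible at each solution and permits continuation to all homotopy parameters. The main obstacle is controlling the mixed-variable $\Phi^\pm_z(\,\cdot\,,y_\rho)$ contributions in the monotonicity estimate, and it is precisely here that the Lipschitz bound \eqref{eq:lip1}, the inverse estimate, and the CFL $\Delta t\leq\delta h$ combine to close the argument.
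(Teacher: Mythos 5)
Your proposal is correct and follows essentially the same route as the paper: existence via a Brouwer-degree homotopy to the linear $H$-problem, with the boundary estimate on the ball of radius $R$ coming from the same convexity inequalities that underlie the stability bound \eqref{eq:fdstab} (you phrase it as an a priori $L^\infty$ bound via the inverse estimate, the paper bounds $F(\alpha,\lambda)\cdot\alpha$ from below directly, but the computation is the same), and uniqueness via exactly the strict-monotonicity argument using \eqref{eq:estH}, \eqref{eq:ellip1}, the convexity of $z\mapsto\Phi^+(z,p)$, \eqref{eq:lip1}, the bound on $\|x^{m+1}_{h,\rho}\|_h$ from \eqref{eq:fdstab}, the inverse estimate, and the CFL condition. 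The only caveat is to rely on the degree/homotopy-invariance argument rather than Jacobian invertibility for the continuation, since $\Phi_p$ need not be differentiable where the discrete tangent vanishes.
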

\begin{proof}
We denote by $\{\varphi_j\}_{j=1}^{2J}$ the basis of $\Vh$ satisfying 
$\varphi_j(q_k) = \delta_{jk} e_1$ and $\varphi_{j+J}(q_k)
= \delta_{jk} e_2$ for $1 \leq j,k \leq J$, and associate with every $\alpha \in \mathbb R^{2J}$ the element
$v_\alpha = \sum_{j=1}^{2J} \alpha_j \varphi_j \in \Vh$, 
so that $v_\alpha(q_j)= \binom{\alpha_j}{\alpha_{j+J}}$. 
We have for $| \alpha | \leq R$ and $\rho \in I$ that
\[
 \dist((x^m_h+ v_\alpha)(\rho),\Gamma^m_h) \leq \Vert v_\alpha \Vert_\infty \leq  | \alpha | \leq  R,
\]
so that $(x^m_h+ v_\alpha)(\rho) \in K_0$.  
Let us next define the continuous map $F:\overline{B_R(0)} \times [0,1] \to \bR^{2J}$ via
\begin{align*}
[F(\alpha,\lambda)]_i & = 
\frac1{\Delta t}
\left(H(x^m_h,x^m_{h,\rho}) v_\alpha, \varphi_i \right)^h
+ \lambda \left(\Phi_p(x^m_h,x^m_{h,\rho}+ v_{\alpha,\rho}) , \varphi_{i,\rho} \right)^h
\nonumber \\ & \qquad
+ \lambda \left( \Phi^+_z(x^m_h+v_\alpha,x^m_{h,\rho}+v_{\alpha,\rho}) 
+ \Phi^-_z(x^m_h,x^m_{h,\rho}+v_{\alpha,\rho}) ,  \varphi_i \right)^h .
\end{align*}
In what follows we make use of standard results for the 
Brouwer degree \linebreak $\deg (f, B_R(0), 0)$
of a continuous function $f:\overline{B_R(0)} \to \bR^{2J}$, 
if $0 \not\in f(\partial B_R(0))$,
see \cite[Chapter~1]{Deimling85}. 
Clearly, the mapping $F(\cdot,0) =: A$ is linear with
\[
[A \alpha]_i= \frac1{\Delta t} \sum_{j=1}^{2J} \alpha_j \left(H(x^m_h,x^m_{h,\rho}) \varphi_j, \varphi_i \right)^h, \quad
i=1,\ldots,2J,
\]
and invertible in view of \eqref{eq:estH}.
Hence it follows from \cite[Theorem~1.1]{Deimling85} that 
\begin{equation} \label{eq:deg0}
\deg(F(\cdot,0),B_R(0),0) = \deg(A,B_R(0),0) = \sgn \det A \neq 0.
\end{equation}
Next, it holds for $\lambda \in [0,1]$ that 
\begin{align}
F(\alpha,\lambda) \cdot \alpha 
& =  \frac1{\Delta t} \left(H(x^m_h,x^m_{h,\rho}) v_\alpha, v_\alpha \right)^h
+ \lambda \left(\Phi_p(x^m_h, x^m_{h,\rho}+v_{\alpha,\rho}), v_{\alpha,\rho} \right)^h
\nonumber \\ & \qquad
+ \lambda \left( \Phi^+_z(x^m_h+v_\alpha,x^m_{h,\rho}+v_{\alpha,\rho}) 
+ \Phi^-_z(x^m_h,x^m_{h,\rho}+v_{\alpha,\rho}), v_\alpha \right)^h. \label{eq:stabest}
\end{align}
Inserting \eqref{eq:Phiconvex} with $\eta_h = v_\alpha$ 
into \eqref{eq:stabest} yields
\begin{align*}
F(\alpha,\lambda) \cdot \alpha & \geq \frac1{\Delta t} \left(H(x^m_h,x^m_{h,\rho}) v_\alpha, v_\alpha \right)^h \nonumber \\
& \qquad 
+ \lambda \bigl( \Phi(x^m_h+v_\alpha,x^m_{h,\rho}+v_{\alpha,\rho}),1 \bigr)^h 
- \lambda \bigl( \Phi(x^m_h,x^m_{h,\rho}), 1 \bigr)^h. 
\end{align*}
If we combine this estimate with \eqref{eq:estH} we finally obtain for 
$| \alpha | =R$ that
\begin{align*}
F(\alpha,\lambda) \cdot \alpha & \geq 
 \frac{C_0}{\Delta t} \Vert v_\alpha \Vert_h^2  - \bigl( \Phi(x^m_h,x^m_{h,\rho}),1 \bigr)^h \\
 & = \frac{C_0}{\Delta t} h | \alpha |^2 
 - \bigl( \Phi(x^m_h,x^m_{h,\rho}),1 \bigr)^h   \geq \frac{C_0 R^2}{\delta}  
 - \bigl( \Phi(x^m_h,x^m_{h,\rho}),1 \bigr)^h
\end{align*}
using \eqref{eq:ip0} and the relation $\Delta t \leq \delta h$. By choosing $\delta$  sufficiently small we obtain $F(\alpha,\lambda) \cdot \alpha>0$ and therefore  
 $F(\alpha,\lambda) \neq 0$ for all 
$(\alpha,\lambda) \in \partial B_R(0) \times [0,1]$. 
Using the homotopy invariance of the
Brouwer degree together with \eqref{eq:deg0} we deduce that 
$\deg(F(\cdot,1),B_R(0),0) = \deg(F(\cdot,0),B_R(0),0) \neq 0$, 
so that the existence property of the degree shows that there is 
$\alpha \in B_R(0)$ such that
$F(\alpha,1) =0$. Clearly, $x^{m+1}_h:= x^m_h+v_\alpha$ is then a solution of \eqref{eq:fea}.

In order to prove uniqueness, suppose that $x^{m+1}_h, \tilde x^{m+1}_h \in \Vh$ are two solutions of \eqref{eq:fea} with
$\Gamma^{m+1}_h, \tilde \Gamma^{m+1}_h \subset K_0$. To begin, we infer from \eqref{eq:fdstab} and \eqref{eq:estPhi} that
\begin{equation} \label{eq:firstderiv}
C_0 \Vert x^{m+1}_{h,\rho} \Vert_h^2 \leq \bigl( \Phi(x^m_h,x^m_{h,\rho}),1 \bigr)^h, \;
 C_0 \Vert \tilde x^{m+1}_{h,\rho} \Vert_h^2 \leq \bigl( \Phi(x^m_h,x^m_{h,\rho}),1 \bigr)^h.
\end{equation}
We have that 
\begin{align*}
&
\frac1{\Delta t} \left(H(x^m_h,x^m_{h,\rho}) (x^{m+1}_{h} - \tilde x^{m+1}_h), \eta_h 
\right)^h 
+ \left(\Phi_p(x^m_h,x^{m+1}_{h,\rho}) - \Phi_p(x^m_h,\tilde x^{m+1}_{h,\rho}), 
\eta_{h,\rho} \right)^h \\ & \qquad
+ \left( \Phi^+_z(x^{m+1}_h,x^{m+1}_{h,\rho}) - \Phi^+_z(\tilde x^{m+1}_h,x^{m+1}_{h,\rho}) 
,  \eta_h \right)^h \\ & \quad 
= \left( \Phi^+_z(\tilde x^{m+1}_h, \tilde x^{m+1}_{h,\rho}) - \Phi^+_z(\tilde x^{m+1}_h,x^{m+1}_{h,\rho}) 
,  \eta_h \right)^h \\ & \qquad 
+ \left( \Phi^-_z(x^m_h,\tilde x^{m+1}_{h,\rho}) -  \Phi^-_z(x^m_h,x^{m+1}_{h,\rho})
,  \eta_h \right)^h
\end{align*}
for all $\eta_h \in \Vh$.
Choosing $\eta = x^{m+1}_h - \tilde x^{m+1}_h$ we deduce with the help of \eqref{eq:estH}, \eqref{eq:ellip1},
\eqref{eq:lip1},  \eqref{eq:firstderiv} and the convexity of $z \mapsto \Phi^+(z,p)$ that
\begin{align*}
&
\frac{C_0}{\Delta t} \| x^{m+1}_{h} - \tilde x^{m+1}_h \|_h^2 
+ C_0 \| x^{m+1}_{h,\rho} - \tilde x^{m+1}_{h,\rho} \|_h^2 \\ & \qquad
\leq 2 L \bigl( \Vert x^{m+1}_{h,\rho} \Vert_h + \Vert \tilde x^{m+1}_{h,\rho} \Vert_h \bigr) \Vert x^{m+1}_{h,\rho} -
\tilde x^{m+1}_{h,\rho} \Vert_h \Vert x^{m+1}_h - \tilde x^{m+1}_h \Vert_\infty \\ & \qquad
\leq 4 L C_0^{-\frac{1}{2}} h^{-\frac{1}{2}} \sqrt{\bigl( \Phi(x^m_h,x^m_{h,\rho}),1 \bigr)^h} \Vert x^{m+1}_{h,\rho} - \tilde x^{m+1}_{h,\rho} \Vert_h  \| x^{m+1}_{h} - \tilde x^{m+1}_h \|_h \\ & \qquad
\leq C_0 \| x^{m+1}_{h,\rho} - \tilde x^{m+1}_{h,\rho} \|_h^2 + 4 L^2 \bigl( \Phi(x^m_h,x^m_{h,\rho}),1 \bigr)^h C_0^{-2}
 h^{-1} \| x^{m+1}_{h} - \tilde x^{m+1}_h \|_h^2,
\end{align*}
with $L= L_{K_0}$.
By choosing $\delta>0$ so small that $4 L^2  \bigl( \Phi(x^m_h,x^m_{h,\rho}),1 \bigr)^h \, \delta < C_0^3$ we deduce that $x^{m+1}_h = \tilde x^{m+1}_h$.
\end{proof}

\section{Numerical results} \label{sec:nr}

For all our numerical simulations 
we use $J=256$ and $\Delta t = 10^{-4}$.
On recalling (\ref{eq:aniso}), for $\chi_h \in \Vh$ we define the discrete
energy
\begin{equation*} 
\mathcal{E}^{h}(\chi_h) = 
\left( \gamma(\chi_h, \chi_{h,\rho}^\perp), \altm(\chi_h)\right)^{h} .
\end{equation*}
We also consider the ratio
\begin{equation*} 
\ratio^m = \dfrac{\max_{j=1,\ldots,J} |x_h^m(q_j) - x_h^m(q_{j-1})|}
{\min_{j=1,\ldots, J} |x_h^m(q_j) - x_h^m(q_{j-1})|}
\end{equation*}
between the longest and shortest element of $\Gamma^m_h = x_h^m(I)$, 
and are often interested in the evolution of this ratio over time.
We stress that no redistribution of vertices was necessary during any of our
numerical simulations.
We remark that a convergence experiment for \eqref{eq:fea},
for an anisotropy of the form
$\gamma(z,p) = \sqrt{p_1^2 + \delta^2 p_2^2}$ with $\delta > 0$, which
confirms the theoretically obtained optimal $H^{1}$--error bound, 
can be found in \cite[\S6.1]{finsler}.

\subsection{The spatially homogeneous case}

In the case that
\begin{equation*} 
\gamma(z,p)= \gamma_0(p) \quad \text{and} \quad \altm(z)= 1
\qquad \forall\ z \in \Omega = \bR^2,
\end{equation*}
for an anisotropy function
$\gamma_0 \in C^2(\bR^2 \setminus\{0\}, \bRplus)$,
the flow \eqref{eq:acsfI} reduces to classical anisotropic curvature flow,
\eqref{eq:classicalacsf}. 
Most existing approaches for
the numerical approximation of 
anisotropic curve shortening flow deal with this simpler case, see e.g.\ 
\cite{Dziuk99,Pozzi07,triplejANI,fdfi}.

For our first experiment we choose the anisotropy from \cite[(7.1)]{Dziuk99},
with 
\begin{equation} \label{eq:gammaD99b}
\gamma_0( p) = | p| (1 + \delta \cos(k\theta(p))),
\quad  p = | p| \binom{\cos\theta(p)}{\sin\theta(p)},\quad 
k = 6 ,\ \delta = 0.028,
\end{equation}
and as initial curve use the one given in
\cite[p.\ 1494]{MikulaS01}, i.e.\ we let
\begin{equation} \label{eq:mikula}
 x(\rho,0)=\binom{\cos{u(\rho)}}
  {\tfrac12\sin{u(\rho)} + \sin{(\cos{u(\rho)})} + \sin{u(\rho)}\,
  [\tfrac15+\sin{u(\rho)}\,\sin^2{u(3\rho)}]}, 
\end{equation}
where $u(\rho) = 2\pi\rho$. The evolution is shown in 
Figure~\ref{fig:mikula}, where we can observe that the shrinking curve 
becomes convex, with its form soon approaching a scaled Wulff shape of the 
six-fold symmetric anisotropy \eqref{eq:gammaD99b}. 
In addition, we once again note that the discrete energy $\mathcal{E}^h$ is
monotonically decreasing, while the tangential motion induced by \eqref{eq:Hxt} leads to only a moderate initial increase in $\ratio^m$, before it decreases 
towards the end. 
\begin{figure}
\center
\includegraphics[angle=-90,width=0.3\textwidth]{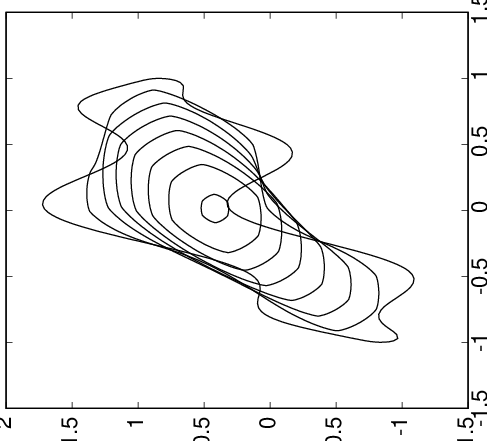}
\includegraphics[angle=-90,width=0.34\textwidth]{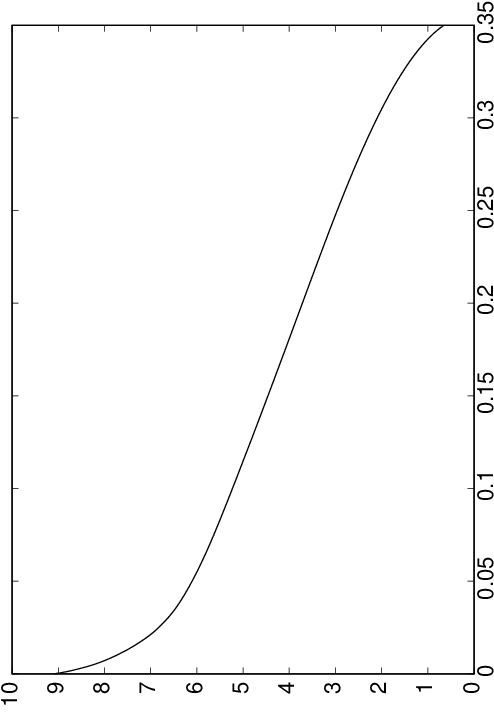}
\includegraphics[angle=-90,width=0.34\textwidth]{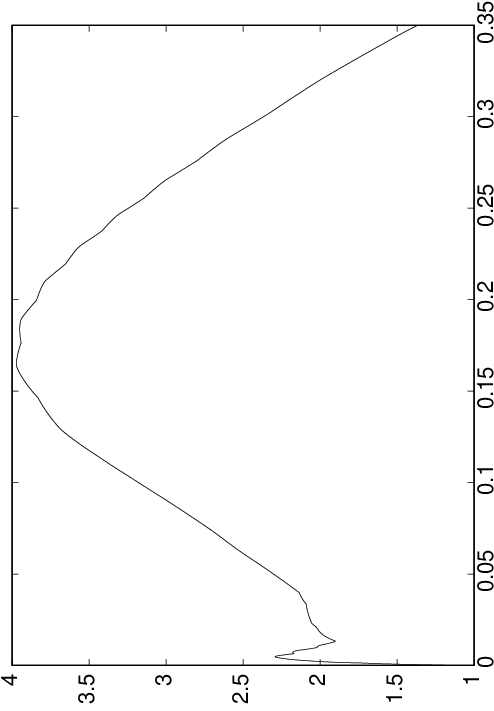}
\caption{Solution at times $t=0,0.05,\ldots,0.35$.
We also show a plot of the discrete energy $\mathcal{E}^h(x_h^m)$ (middle)
and of the ratio $\ratio^m$ over time (right).} 
\label{fig:mikula}
\end{figure}
With our next experiment we would like to demonstrate that our scheme can
easily be extended to situations where a forcing term appears in the flow,
e.g.\
\begin{equation} \label{eq:Fflow}
\mathcal V_\gamma = \varkappa_\gamma + f(\cdot,\nu)
\quad \text{ on } \Gamma(t),
\end{equation}
in place of \eqref{eq:anisolaw}, where $f : \Omega \times \bR^2 \to \bR$. 
This leads to the additional term 
$\int_I f(x,\nu) H(x, x_\rho) \nu \cdot \eta\drho$ on the right hand side of
\eqref{eq:Hxtweak}, and analogously to the additional term 
$( f(x^m_h,\frac{(x^{m}_{h,\rho})^\perp}{|x^m_{h,\rho}|}) 
H(x^m_\rho, x^m_{h,\rho}) \frac{(x^{m}_{h,\rho})^\perp}{|x^m_{h,\rho}|}, 
\eta_h )^h$ on the right hand side of \eqref{eq:fea}. In our numerical
experiments we choose
\[
f(z,\nu) = f_0 \in \bR,
\]
so that \eqref{eq:Fflow} overall reduces to
$\frac{1}{\gamma_0(\nu)} \mathcal V = \varkappa_{\gamma_0} + f_0$,
compare with \eqref{eq:classicalacsf}. Starting this flow from the same
initial data \eqref{eq:mikula}, but now with the forcing $f_0 = 1.15$,
leads to an expanding curve that, upon an appropriate rescaling, 
approaches the boundary of the Wulff shape, see Figure~\ref{fig:mikulaF}. 
What is particularly interesting in the observed evolution is that the ratio
$\ratio^m$ appears to tend towards unity, indicating an asymptotic
equidistribution of the vertices on the polygonal curve.
\begin{figure}
\center
\includegraphics[angle=-90,width=0.3\textwidth]{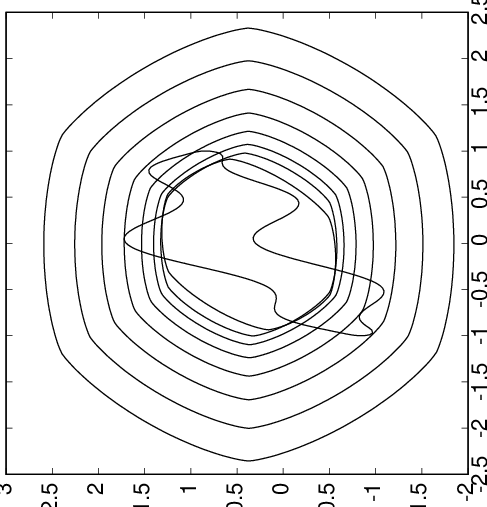}
\includegraphics[angle=-90,width=0.34\textwidth]{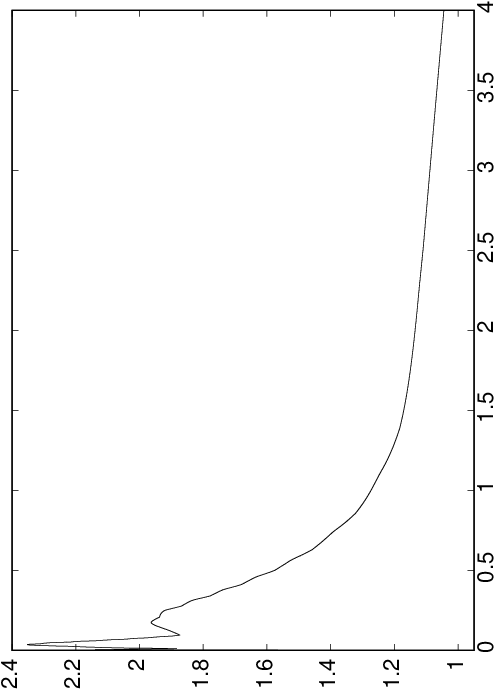}
\caption{Solution at times $t=0,0.5,\ldots,4$.
We also show a plot of 
the ratio $\ratio^m$ over time.} 
\label{fig:mikulaF}
\end{figure}

\subsection{Geodesic curvature flow in Riemannian manifolds}
\label{sec:geo}
Let $(\mathcal M,g)$ be a 
two-dimensional Riemannian manifold, with local parameterization 
$F: \Omega \to \mathcal M$ and corresponding basis 
$\{ \partial_1,\partial_2 \}$ of the tangent space. We define 
\begin{equation*} 
\gamma(z,p)= \sqrt{G^{-1}(z) p \cdot p} \quad \text{and} \quad 
\altm(z)=\sqrt{\det G(z)}.
\end{equation*}
Here, $G(z)=(g_{ij}(z))_{i,j=1}^2$, with 
$g_{ij}(z)=g_{F(z)}(\partial_i,\partial_j)$, $z \in \Omega$.
Then \eqref{eq:aniso} reduces to 
$\mathcal{E}(\Gamma)=\int_{\Gamma} \sqrt{G \tau \cdot \tau} \dH1$, 
the Riemannian length of the curve $\tilde \Gamma = 
F(\Gamma) \subset \mathcal M$, and it can be shown that
\eqref{eq:anisolaw} is now equivalent to geodesic curvature flow in $\mathcal M$,
see \cite[Appendix~B]{finsler} for details. 
Furthermore, in \cite[\S5.2]{finsler} a possible construction of the splitting
$\Phi_z^\pm$ is given, with the help of which the scheme \eqref{eq:fea} reduces
to (5.10) in \cite{finsler}. 

As an example 
we consider the case when $(\mathcal M,g)$ is a hypersurface in the
Euclidean space $\bR^3$. Assuming that $\mathcal M$ can be written as a graph,
we let
\begin{equation*} 
F(z) = (z_1, z_2, \varphi(z))^T,\quad \varphi \in C^3(\Omega).
\end{equation*} 
The induced matrix $G$ is then given by 
$G(z) = \Id + \nabla \varphi(z) \otimes \nabla \varphi(z)$,
and we have that $\Phi(z,p) = \frac12 G(z) p \cdot p$.
For the splitting $\Phi = \Phi^+ + \Phi^-$ it is natural to 
let $\Phi^+(z,p) = \frac12 G_+(z) p \cdot p$, where
$G_+(z)=G(z) + c_\varphi |z|^2 \Id$ and 
$c_\varphi \in \bRgeq$ is chosen sufficiently large. 
In our computation
we observe a monotonically decreasing discrete energy when choosing
$c_\varphi=0$, and so we let $\Phi^+ = \Phi$.

For our numerical simulation, following \cite{WuT10}, we define
a surface with three ``mountains'' via
\begin{equation} \label{eq:varphi}
\varphi(z) = \sum_{i=1}^3 \lambda_i \psi(2|z - \mu_i|^2)
\text{ with } \Omega = \bR^2,
\quad \text{where } \
\psi(s) = \begin{cases}
 e^{- \frac1{1 - s}} & s < 1,\\
 0 & s \geq 1,
\end{cases} 
\end{equation}
and where $\mu_1 = 0$, $\mu_2 = \tbinom20$, $\mu_3 = \tbinom1{\sqrt{3}}$
and $(\lambda_1, \lambda_2, \lambda_3) = (1,3,4)$. 
On letting the initial polygonal curve be defined by
an equidistributed approximation of a circle of radius 2 in $\Omega$, 
centred at $\frac13 \sum_{i=1}^3 \mu_i$, 
we show the evolution for geodesic curvature flow on 
the defined hypersurface in Figure~\ref{fig:riemmount3}.
During the flow the curve tries to decrease its (Euclidean) length, while
remaining on the manifold. As the initial circle begins to shrink, the curve is
able to pass over the smallest of the three ``mountains'', but then reaches a
steady state solution encompassing the two taller mountains. Here the curve
cannot reduce its length further, because to rise higher up would yield an
increase in its length, since it needs to remain attached to the flat part of 
the surface between the two mountains.
\begin{figure}
\center
\includegraphics[trim=100 50 90 50,clip,width=0.21\textwidth]{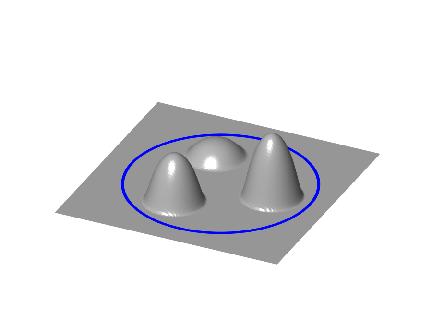}
\includegraphics[trim=100 50 90 50,clip,width=0.21\textwidth]{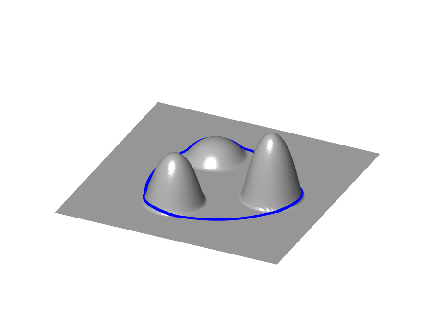}
\includegraphics[trim=100 50 90 50,clip,width=0.21\textwidth]{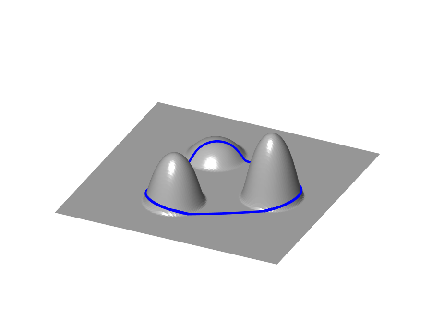}
\includegraphics[trim=100 50 90 50,clip,width=0.21\textwidth]{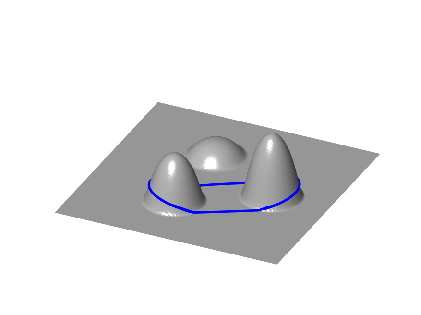} 
\includegraphics[angle=-90,width=0.34\textwidth]{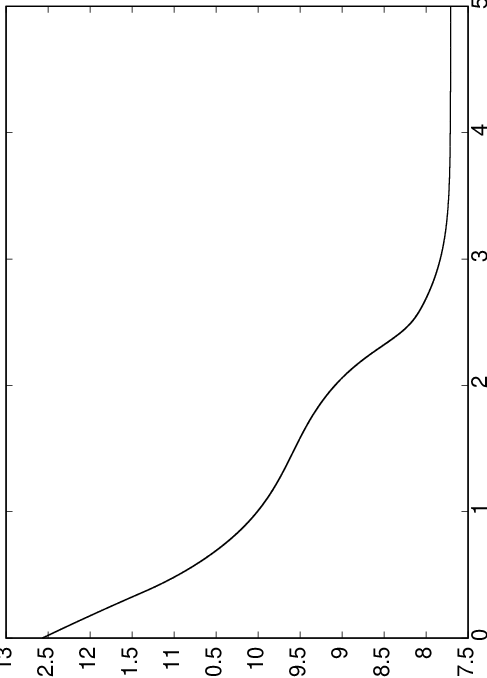}
\includegraphics[angle=-90,width=0.34\textwidth]{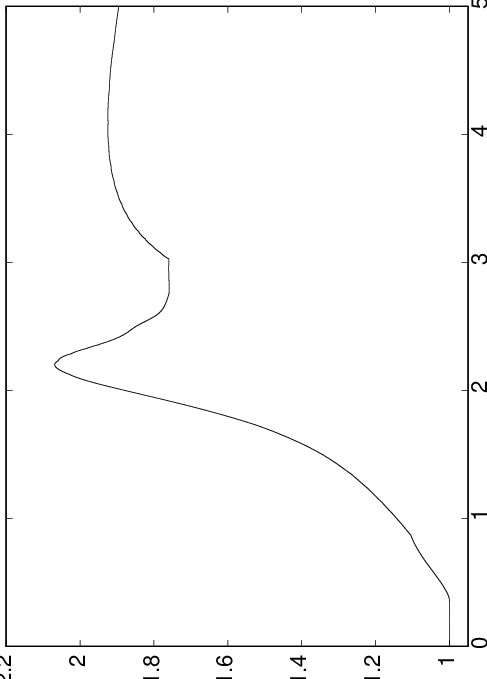}
\caption{Geodesic curvature flow on the graph defined by
\eqref{eq:varphi}.
We show the evolution of $F(x^m_h)$ on $\mathcal M$ at times $t=0, 1, 2, 4$.
Below we show a plot of the discrete energy $\mathcal{E}^h(x_h^m)$ (left)
and of the ratio $\ratio^m$ over time (right).} 
\label{fig:riemmount3}
\end{figure}%
We note that as soon as one of the larger ``mountains'' is not enclosed by the
initial curve, then the evolution is going to lead to extinction in finite
time.
We show this in Figure~\ref{fig:riemmount33}, where the initial circle is
now centred at $\binom{0}{\frac12}$. Here the curve can continually decrease its
length, until it reaches the peak of the tallest ``mountain'', where it shrinks
to a point.
\begin{figure}
\center
\includegraphics[trim=100 50 90 50,clip,width=0.21\textwidth]{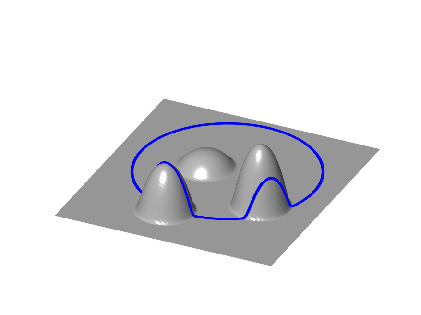}
\includegraphics[trim=100 50 90 50,clip,width=0.21\textwidth]{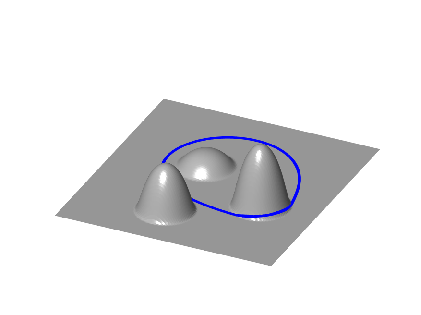}
\includegraphics[trim=100 50 90 50,clip,width=0.21\textwidth]{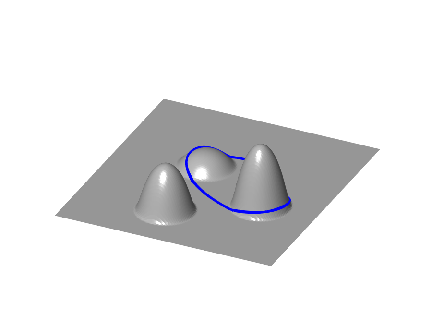}
\includegraphics[trim=100 50 90 50,clip,width=0.21\textwidth]{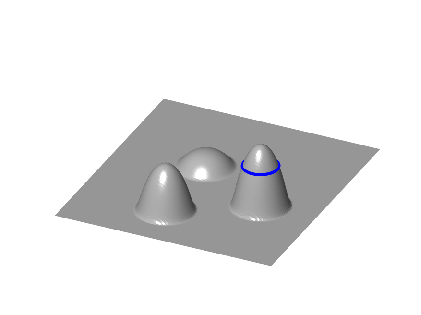} 
\caption{Geodesic curvature flow on the graph defined by
\eqref{eq:varphi}.
We show the evolution of $F(x^m_h)$ on $\mathcal M$ at times $t=0, 1, 2, 4$.
} 
\label{fig:riemmount33}
\end{figure}%

\def\soft#1{\leavevmode\setbox0=\hbox{h}\dimen7=\ht0\advance \dimen7
  by-1ex\relax\if t#1\relax\rlap{\raise.6\dimen7
  \hbox{\kern.3ex\char'47}}#1\relax\else\if T#1\relax
  \rlap{\raise.5\dimen7\hbox{\kern1.3ex\char'47}}#1\relax \else\if
  d#1\relax\rlap{\raise.5\dimen7\hbox{\kern.9ex \char'47}}#1\relax\else\if
  D#1\relax\rlap{\raise.5\dimen7 \hbox{\kern1.4ex\char'47}}#1\relax\else\if
  l#1\relax \rlap{\raise.5\dimen7\hbox{\kern.4ex\char'47}}#1\relax \else\if
  L#1\relax\rlap{\raise.5\dimen7\hbox{\kern.7ex
  \char'47}}#1\relax\else\message{accent \string\soft \space #1 not
  defined!}#1\relax\fi\fi\fi\fi\fi\fi}

\end{document}